\newcommand{\E}{\mathbb{E}}
\newcommand{\R}{\mathbb{R}}
\newcommand{\N}{\mathbb{N}}
\newcommand{\tY}{\widetilde{Y}}
\newcommand{\tZ}{\widetilde{Z}}
\newcommand{\bz}{\mathbf{z}}
\newcommand{\kH}{\mathfrak{H}}
\newcommand{\1}{\mathbf{1}}
\newcommand{\lgl}{\langle}
\newcommand{\rgl}{\rangle}
\newcommand{\cF}{\mathcal{F}}
\newcommand{\tot}{\widetilde{\otimes}}
\newcommand{\wth}{\widetilde{h}}
\newcommand{\hh}{\widehat{h}}
\newcommand{\dP}{\mathbb{P}}
\newcommand{\br}{\mathbf{r}}
\newcommand{\dmd}{\diamond}
\newcommand{\bH}{\mathbf{H}}
\newcommand{\hQ}{\widehat{Q}}
\newcommand{\hB}{\widehat{B}}
\newcommand{\tB}{\widetilde{B}}
\newtheorem{theorem}{Theorem}[section]
\newtheorem{definition}[theorem]{Definition}
\newtheorem{remark}[theorem]{Remark}
\newenvironment{hyp}[1]
 {\innerhyp}
 {\endinnerhyp}
\begin{document}
\numberwithin{equation}{section}
\title{Intermittency for the Parabolic Anderson model of Skorohod type driven by a rough noise
}

\author{Nicolas Ma \footnote{Email: nma@ku.edu}}
\author{David Nualart  \footnote{Supported by NSF grants DMS 1811181. Email: nualart@ku.edu}}
\author{Panqiu Xia \footnote{Email: pqxia@ku.edu}}

\affil{Department of Mathematics, University of Kansas, Lawrence, KS, 66044, USA.}
\date{}
\maketitle

\begin{abstract}	In this paper, we study the parabolic Anderson model of Skorohod type driven by a fractional Gaussian noise in time with Hurst parameter $H\in (0,1/2)$. By using the Feynman-Kac representation for the $L^p(\Omega)$ moments of the solution, we find the upper and lower bounds for the  moments.
\end{abstract}
\smallskip 
\noindent \textbf{Keywords.}  Parabolic Anderson model, Wick product, Skorohod integral, Feynman-Kac formula,  Moment bounds, Brownian bridges, Intermittency.

\section{Introduction}\label{s.1}
In this paper, we consider the following parabolic Anderson model of Skorohod type
\begin{align}\label{spde}
\frac{\partial}{\partial t}u(t,x)=\frac{1}{2}\Delta u(t,x)+u(t,x)\dmd \frac{\partial}{\partial t}W(t,x),
\end{align}
where $\dmd$ denotes the Wick product. The noise $W=\{W(t,x),(t,x)\in \R_+\times \R^d\}$ is a Gaussian random field, that is a fractional Brownian motion of Hurst parameter $H\in(0,\frac{1}{2})$ in time, and has a correlation in space given by a function $Q$, namely,
\begin{align*}
\E [W(t,x)W(s,y)]=\frac{1}{2}\big(t^{2H}+s^{2H}-|t-s|^{2H}\big)Q(x,y),
\end{align*}
for all $s,t\in \R_+$ and $x,y\in \R^d$. We assume that the covariance function $Q$ satisfies the following conditions:
\begin{hyp}{(H1)}\label{hyp1}
There exist constants $\alpha\in(1-2H,1]$ and $C_1>0$ such that
\begin{align}
Q(x,x)+Q(y,y)-2Q(x,y)\leq C_1|x-y|^{2\alpha},
\end{align}
for all $x,y\in \R^d$.
\end{hyp}
\begin{hyp}{(H2)}\label{hyp2}
There exist constants $\beta\in [0,1)$ and $C_2>0$ such that for any $M>0$,
\begin{align}
\inf_{\displaystyle\min_{i=1,\dots,d}(|x_i|\wedge |y_i|)>M}Q(x,y)\geq C_2M^{2\beta}.
\end{align}
\end{hyp}

A similar equation in the Stratonovich sense, where the Wick product in \eqref{spde} is replaced by the ordinary product, has been studied by Hu et al. \cite{ap-12-hu-lu-nualart} and Chen et al. \cite{ptrf-18-chen-hu-kalbasi-nualart}. In these papers, it has been proved that under Hypotheses \ref{hyp1} and \ref{hyp2}, the Stratonovich type equation with bounded initial condition has a unique solution, which admits a Feynman-Kac representation. Additionally, by using the Feynman-Kac formula for the moments of the solution, the authors in \cite{ptrf-18-chen-hu-kalbasi-nualart} studied the intermittency phenomenon for the solution and obtain the following bounds
\begin{align*}
\underline{C}_x \exp\Big(\underline{C}n^{\frac{2-\beta}{1-\beta}}t^{\frac{2H+\beta}{1-\beta}}\Big)\leq \E \big[u(t,x)^k\big]\leq \overline{C}_x\exp\Big(\overline{C}n^{\frac{2-\alpha}{1-\alpha}}t^{\frac{2H+\alpha}{1-\alpha}}\Big)
\end{align*}
for all $t\geq 1$, $x\in \R^d$ and $n\geq 1$, where  $\underline{C}$ and $\overline{C}$ are positive constants depending on $d,H,\alpha, \|u_0\|_{\infty}$ and $\underline{C}_x,\overline{C}_x>0$ depend on $d,H,\alpha, \|u_0\|_{\infty}$ and $x$.

In this paper, we will study the intermittency for the Skorohod equation \eqref{spde}. The upper bounds for the moments of the solution can be easily obtained. This is due to the fact that the solution to the Skorohod equation is bounded by the solution to the equation of Stratonovich type. For the same reason, to get lower bounds is more involved. By using the Feynman-Kac formula for the moments, we see that in comparison with the Stratonovich case, the exponent in our case contains an additional negative term. This increases the difficulty to estimate lower bounds for the moments. To settle this difficulty, we pin the Brownian motion $B_t$ at the middle point $t/2$, and observe that conditional on $B_{t/2}=r$, $B_s$ is a Brownian bridge before time $t/2$, and an independent Brownian motion after $t/2$. Then, we estimate the probability of the event that the supremum and the H\"{o}lder norm of the Brownian bridge (motion) are bounded above and below by appropriate constants. This allows us to find a lower bound for the moments of the solution.

This paper is organized as follows. In Section \ref{s.2}, we give a brief introduction on the Malliavin calculus and present the precise definition of the solution to equation \eqref{spde}. In Section \ref{s.3}, following the idea of Hu et al. \cite{ap-12-hu-lu-nualart}, we prove that equation \eqref{spde} has a unique solution and give the Feynman-Kac formula and the chaos expansion of the solution. Then, we provide the upper bounds for the moments. Finally, the lower bounds for the moments are proved in Section \ref{s.4}.

\section{Preliminaries}\label{s.2}
Let $W=\{W(t,x),(t,x)\in \R_+\times\R^d\}$ be the Gaussian random field introduced in Section \ref{s.1} defined on a probability space $(\Omega,\cF, \dP)$. Let $\kH$ be the Hilbert space defined as the completion of the linear span of the indicator functions of rectangles of $\R_+\times \R^d$ with respect to the inner product
\[
\lgl \1_{[0,t]\times [0,x]}, \1_{[0,s]\times [0,y]}\rgl_{\kH}=\frac{1}{2}\big(t^{2H}+s^{2H}-|t-s|^{2H}\big)Q(x,y),
\] 
for all $s,t\in \R_+$ and $x=(x_1,\dots,x_d),y=(y_1,\dots,y_d)\in \R^d$, where  $\1_{[0,x]}=\prod_{i=1}^d\1_{[0,x_i]}$ and $\1_{[0,x_i]}=-\1_{[x_i,0]}$ if $x_i< 0$. 
For any function $h\in \kH$, we write
\[
W(h):=\int_0^{\infty}\int_{\R^d}h(t,x) W(dt,dx),
\]
where the integral is the It\^{o}-Wiener integral. In other words, $\{W(h),h\in\kH\}$ is an isonormal Gaussian process on $\kH$, that is, a centered Gaussian family with covariance
\[
\E [W(h) W(\hh)]=\lgl h,\hh\rgl_{\kH},
\]
for all $h, \hh \in \kH$. For any positive integer $n$, we write $H_n$ for the Hermite polynomial on $\R$, that is,
\[
H_n(x)=\frac{(-1)^n}{n!}e^{\frac{x^2}{2}}\frac{d^n}{dx^n}e^{-\frac{x^2}{2}},\quad x\in \R.
\]
Let $\bH_n$ be the closed linear subspace of $L^2(\Omega)$ generated by the set of random variables $\{H_n(W(h)), h\in \kH, \|h\|_{\kH}=1\}$. The space $\bH_n$ is called the $n$-th Wiener chaos. Denote by $\kH^{\otimes n}$ the $n$-fold tensor product space of $\kH$. We write $I_n$ for the isometry map between $\kH^{\otimes n}$ (with the modified norm $\sqrt{n!}\|\cdot\|_{\kH^{\otimes n}}$) and $\bH_n$, given by $I_n(h^{\otimes n})=H_n(W(h))$. It is known (c.f. Lemma 1.1.1 and Theorem 1.1.2 of Nualart \cite{springer-06-nualart}) that
\begin{enumerate}[(i)]
\item $\bH_n$ and $\bH_m$ are orthogonal if $n\neq m$. That is
\[
\E (FG)=0,\quad \forall F\in \bH_n, G\in \bH_m, n\neq m.
\]

\item Any square integrable $W$-measurable random variable $F$ can by uniquely represented as the following orthogonal Wiener chaos expansion
\begin{align}\label{caof}
F=\E (F)+\sum_{n=1}^{\infty}I_n(f_n),
\end{align}
where $f_n\in\kH^{\otimes n}$ are symmetric. 
\end{enumerate}
By above properties and the isometry between $\kH^{\otimes n}$ and $\bH_n$, for any $F\in L^2(\Omega)$ has the chaos expansion \eqref{caof}, the following equality holds
\begin{align*}
\E (F^2)=\E (F)^2+\sum_{n=1}^{\infty}n!\|f_n\|_{\kH^{\otimes n}}^2.
\end{align*}
Let $F,G\in L^2(\Omega)$. Suppose that $F=\E(F)+\sum_{n=1}^{\infty} I_n(f_n)$ and $G=\E(G)+\sum_{m=1}^{\infty} I_m(g_m)$. Then, by definition, the Wick product of $F$ and $G$ can be  written as the following expression, if the last series is convergent in $L^2(\Omega)$,
\[
F\dmd G=E(F)\sum_{m=1}^{\infty}I_m(g_m)+E(G)\sum_{n=1}^{\infty}I_n(f_n)+\sum_{n,m=1}^{\infty}I_{n+m}(f_n\tot g_m),
\]
where $f_n\tot g_m$ is the symmetrization of $f_n\otimes g_m$ in $\kH^{\otimes (n+m)}$.
\begin{remark}
The assumption $F,G\in L^2(\Omega)$ does not imply the convergence of $F\dmd G$. We refer the readers to the book of Hu \cite{ws-16-hu} for a detailed account on the Wick product and  sufficient conditions for the existence of $F\dmd G$.
\end{remark}
Let $u=\{u(t,x),(t,x)\in \R_+\times \R^d\}$ be a $W$-measurable random field. Suppose that $\E [u(t,x)^2]<\infty$ for all $(t,x)\in \R_+\times \R^d$. Then, $u(t,x)$ has a Wiener chaos expansion as follows
\begin{align}\label{preucs}
u(t,x)=\E (u(t,x))+\sum_{n=1}^{\infty}I_n(h_n(\cdot, t,x)).
\end{align}
In the following, we define the Skorohod integral and the solution to the Skorohod type stochastic partial differential equation (SPDE) \eqref{spde}. For more details on this topic, we refer the readers to Hu and Nualart \cite{ptrf-09-hu-nualart}.
\begin{definition}\label{srk}
A square integrable random field $u$ of the the form \eqref{preucs} is called to be Skorohod integrable, if $\E(u)\in \kH$, $h_n\in \kH^{\otimes (n+1)}$ for all $n\geq 1$ and the series
\[
\delta(u)=\int_0^{\infty}\int_{\R^d}u(t,x)\delta W( t, x):=W(\E(u))+\sum_{n=1}^{\infty}I_{n+1}(\wth_n)
\]
converges in $L^2(\Omega)$, where $\wth_n$ is the symmetrization of $h_n$ as an element in $\kH^{\otimes (n+1)}$. The collection of all such random fields is denoted by $Dom(\delta)$.
\end{definition}
\begin{definition}
Let $u_0$ be a bounded measurable function on $\R^d$. A random field $u=\{u(t,x)\in \R_+\times \R^d\}$ is said to be a (mild) solution to the SPDE \eqref{spde} with initial condition $u_0$, if for any $(t,x)\in \R_+\times \R^d$ the random field
\[
\Big\{\1_{[0,t]}(s)\int_{\R^d}p_{t-s}(x-z)u(s,z)\1_{[0,z]}(y)dz,(s,y)\in \R_+\times \R^d\Big\}
\]
is an element of $Dom(\delta)$, and the following equality holds almost surely,
\begin{align*}
u(t,x)=\int_{\R}p_t(x-y)u_0(y)dy+\int_0^{\infty}\int_{\R^d}\Big(\1_{[0,t]}(s)\int_{\R^d}p_{t-s}(x-z)u(s,z)\1_{[0,z]}(y)dz\Big) \delta W( s, y),
\end{align*}
where $p_t(x)=(2\pi t)^{-\frac{d}{2}}e^{-\frac{|x|^2}{2t}}$ denotes the heat kernel on $\R^d$ and the last integral is the Skorohod integral in the sense of Definition \ref{srk}.
\end{definition}

\section{Feynman-Kac formula, chaos expansion and the upper bound}\label{s.3}
Let $B$ be a standard $d$-dimensional Brownian motion independent of $W$. For any $(t,x)\in \R_+\times \R^d$, let $B^x_t=x+B_t$, and let $g^B_{t,x}:\R_+\times \R^d\to \R$ be given by 
\begin{align}\label{gtx}
g_{t,x}^B(r,z):=\1_{[0,t]}(r)\1_{[0,B^x_{t-r}]}(z).
\end{align}
Then due to Theorem 2.2 of Chen et al. \cite{ptrf-18-chen-hu-kalbasi-nualart}, we know that $g_{t,x}\in \kH$.  Since the Feynman-Kac representation for the Stratonovich type equation has been already established in \cite{ptrf-18-chen-hu-kalbasi-nualart}, then by the same argument as in Section 6 of Hu et al. \cite{ap-12-hu-lu-nualart}, we can immediately derive the following theorem.
\begin{theorem}
Suppose that $Q$ satisfies Hypothesis \ref{hyp1}. Let $B$ be a standard $d$-dimensional Brownian motion independent of $W$. For any $(t,x)\in \R_+\times \R^d$, let $g^B_{t,x}$ be defined in \eqref{gtx}. Then for any bounded measurable function $u_0$ on $\R^d$, the process $u=\{u(t,x), (t,x)\in \R_+\times \R^d\}$ given by
\begin{align}\label{fk}
u(t,x)=\E \big[u_0(B_t^x)\exp\big(W(g_{t,x}^B)-\frac{1}{2}\|g_{t,x}^B\|_{\kH}^2\big)\big]
\end{align}
is the unique (mild) solution to \eqref{spde} with initial condition $u_0$.
\end{theorem}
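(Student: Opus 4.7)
The plan is to follow the argument of Section 6 in Hu, Lu and Nualart \cite{ap-12-hu-lu-nualart}, transposed from the spatially fractional setting treated there to the time-fractional setting considered here. Since the Stratonovich Feynman-Kac representation is already established in \cite{ptrf-18-chen-hu-kalbasi-nualart}, what essentially remains is to insert the Wick renormalisation $-\tfrac{1}{2}\|g^B_{t,x}\|^2_{\kH}$ into the exponential and verify that the resulting random field meets the mild formulation of the Skorohod equation.

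The first step is to expand \eqref{fk} into Wiener chaos. Since $B$ and $W$ are independent, the outer expectation in \eqref{fk} is to be interpreted as expectation with respect to $B$ alone. Conditioning on $B$, the integrand is the Wick exponential of $W(g^B_{t,x})$, which admits the classical chaos decomposition in terms of the multiple integrals $I_n\bigl((g^B_{t,x})^{\otimes n}\bigr)$. Interchanging the $B$-expectation with the Wiener chaos sum---justified by an $L^2(\Omega)$ bound on partial sums that reduces to controlling $\E\bigl[\langle g^B_{t,x}, g^{B'}_{t,x}\rangle^n_{\kH}\bigr]$ for an independent copy $B'$ of $B$, where Hypothesis \ref{hyp1} enters through the norm estimates on $\|g^B_{t,x}\|_{\kH}$ already obtained in \cite{ptrf-18-chen-hu-kalbasi-nualart}---produces a chaos expansion of $u(t,x)$ whose $n$-th kernel is (up to a universal normalisation constant) $\E^B[u_0(B^x_t)(g^B_{t,x})^{\otimes n}]$.

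The second step is to verify that this random field satisfies the Skorohod mild equation. I would substitute the chaos expansion of $u(s,z)$ into the right-hand side of the mild equation, apply Definition \ref{srk} to read off the $(n+1)$-st chaos kernel of the resulting Skorohod integral, and then invoke the Markov property of $B$ at time $s$ together with the time-splitting
\[
g^B_{t,x}(r,y)=g^B_{t,x}(r,y)\1_{[0,s]}(r)+g^B_{t,x}(r,y)\1_{[s,t]}(r)
\]
to match these kernels with those of the candidate $u(t,x)$. This combinatorial matching is precisely the Skorohod analogue of the calculation in Section 6 of \cite{ap-12-hu-lu-nualart}; the Wick renormalisation makes all cross-contractions vanish, so the bookkeeping is in fact cleaner than in the Stratonovich case. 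Uniqueness then follows by equating chaos kernels: any mild solution in $L^2(\Omega)$ has kernels forced by the recursion encoded in the mild equation, and that recursion is solved uniquely by the family obtained from the Feynman-Kac formula.

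The main obstacle I anticipate is the chaos-level identification in the second step: one must carefully track which coordinate of the tensor $(g^B_{t,x})^{\otimes n}$ plays the role of the Skorohod integration variable $(s,y)$ versus the $n-1$ coordinates that remain inside $I_{n-1}$, and then reassemble the full tensor power through the Markov decomposition. Once this identification is carried out, the required convergence estimates transfer almost verbatim from the bounds in \cite{ap-12-hu-lu-nualart,ptrf-18-chen-hu-kalbasi-nualart}, with the single norm $\|\cdot\|_{\kH}$ now encoding both the spatial covariance $Q$ and the fractional-in-time structure.
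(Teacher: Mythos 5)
Your proposal follows exactly the route the paper itself takes: the paper gives no written proof of this theorem but simply invokes ``the same argument as in Section 6 of Hu et al.\ \cite{ap-12-hu-lu-nualart}'' combined with the Stratonovich representation from \cite{ptrf-18-chen-hu-kalbasi-nualart}, which is precisely the chaos-expansion-and-kernel-matching argument you outline. Your sketch is a correct and somewhat more explicit account of that same strategy, so there is nothing to add.
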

\begin{remark} We can further deduce that $u(t,x)$  has the following chaos expansion,
\begin{align*}
u(t,x)=\sum_{n=0}^{\infty}I_n(h_n(t,x)),
\end{align*}
with
\begin{align*}
h_n(t,x)(\br,\bz)=\frac{1}{n!}\E\big[u_0(B_t^x)g_{t,x}^{B^{1}}(r_1,z_1)\dots g_{t,x}^{B^{n}}(r_n,z_n)\big],
\end{align*}
where $\{B^k\}_{k\geq 1}$ are independent copies of $B$, $\br=(r_1,\dots, r_n)\in \R_+^n$ and $\bz=(z_1,\dots, z_n)\in (\R^d)^n$.
\end{remark}
The next theorem provides an upper bound for moments of the solution to \eqref{spde}.
\begin{theorem}\label{up}
Suppose that $u_0$ is bounded and $Q$ satisfies Hypothesis \ref{hyp1}. Let $u$ be the solution to equation \eqref{spde}. Then for all positive integer $n$, $t\geq 1$ and $x\in\R^d$, the following inequality holds,
\begin{align*}
\E \big[u(t,x)^n\big]\leq C_x\exp \Big(C n^{\frac{2-\alpha}{1-\alpha}}t^{\frac{2H+\alpha}{1-\alpha}}\Big),
\end{align*}
where $C>0$ depends on $d,H,\alpha, \|u_0\|_{\infty}$ and $C_x>0$ depends on  $d,H,\alpha, \|u_0\|_{\infty}$ and $x$.
\end{theorem}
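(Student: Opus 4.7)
The plan is to derive a Feynman–Kac representation for $\E[u(t,x)^n]$, then dominate it by the corresponding Stratonovich moment formula and quote the bound already proved in Chen--Hu--Kalbasi--Nualart \cite{ptrf-18-chen-hu-kalbasi-nualart}.

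First, I would take $n$ independent copies $B^1,\dots,B^n$ of the Brownian motion $B$, independent of $W$, and write
\begin{align*}
u(t,x)^n=\E_{B}\Big[\prod_{k=1}^{n}u_0\big(B_t^{k,x}\big)\exp\Big(\sum_{k=1}^n W\big(g_{t,x}^{B^k}\big)-\frac{1}{2}\sum_{k=1}^n\big\|g_{t,x}^{B^k}\big\|_{\kH}^2\Big)\Big],
\end{align*}
where $\E_B$ denotes expectation with respect to $B^1,\dots,B^n$ only. Conditioning on these Brownian motions, the variable $\sum_{k=1}^n W(g_{t,x}^{B^k})$ is centered Gaussian with variance $\|\sum_{k=1}^n g_{t,x}^{B^k}\|_{\kH}^2$, so Fubini together with the Gaussian moment formula gives
\begin{align*}
\E\big[u(t,x)^n\big]=\E_{B}\Big[\prod_{k=1}^n u_0\big(B_t^{k,x}\big)\exp\Big(\sum_{1\leq j<k\leq n}\big\langle g_{t,x}^{B^j},g_{t,x}^{B^k}\big\rangle_{\kH}\Big)\Big].
\end{align*}
This is the Skorohod analogue of the Feynman–Kac moment formula; compared to the Stratonovich case it is missing the diagonal self-inner-product contributions $\frac{1}{2}\sum_{k}\|g_{t,x}^{B^k}\|_{\kH}^2$.

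Next, since $\|\cdot\|_{\kH}^2\geq 0$, the pointwise bound
\begin{align*}
\sum_{1\leq j<k\leq n}\big\langle g_{t,x}^{B^j},g_{t,x}^{B^k}\big\rangle_{\kH}\leq \frac{1}{2}\Big\|\sum_{k=1}^n g_{t,x}^{B^k}\Big\|_{\kH}^2=\frac{1}{2}\sum_{j,k=1}^n \big\langle g_{t,x}^{B^j},g_{t,x}^{B^k}\big\rangle_{\kH}
\end{align*}
holds. Inserting this into the moment representation yields
\begin{align*}
\E\big[u(t,x)^n\big]\leq \|u_0\|_\infty^n\,\E_B\Big[\exp\Big(\tfrac{1}{2}\Big\|\sum_{k=1}^n g_{t,x}^{B^k}\Big\|_{\kH}^2\Big)\Big],
\end{align*}
and the right-hand side is exactly (a constant multiple of) the $n$-th moment of the Stratonovich solution studied in \cite{ptrf-18-chen-hu-kalbasi-nualart}.

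Finally, I would invoke the upper bound of \cite{ptrf-18-chen-hu-kalbasi-nualart} under Hypothesis \ref{hyp1}, which yields
\begin{align*}
\E_B\Big[\exp\Big(\tfrac{1}{2}\Big\|\sum_{k=1}^n g_{t,x}^{B^k}\Big\|_{\kH}^2\Big)\Big]\leq C_x\exp\Big(Cn^{\frac{2-\alpha}{1-\alpha}}t^{\frac{2H+\alpha}{1-\alpha}}\Big),
\end{align*}
with constants of the required dependence, and the theorem follows. No step is a real obstacle here: the only delicate ingredient is the evaluation of the conditional Gaussian expectation against the Wick exponential, and the real work was done in the cited Stratonovich paper — the Skorohod case is strictly easier because the diagonal self-energy terms (which are the potentially troublesome positive contributions) are absent from the exponent.
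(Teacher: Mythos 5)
Your proposal is correct and follows essentially the same route as the paper: derive the Skorohod moment formula $\E[u(t,x)^n]=\E^B\big[\prod_k u_0(B_t^{k,x})\exp\big(\frac{1}{2}\sum_{i\neq j}\langle g_{t,x}^{B^i},g_{t,x}^{B^j}\rangle_{\kH}\big)\big]$, add the nonnegative diagonal terms to dominate it by the Stratonovich moment, and invoke the bound of Chen, Hu, Kalbasi and Nualart (Theorem 3.1 there). Your write-up is slightly more explicit about how the Gaussian exponential moment produces the off-diagonal exponent, but the substance is identical.
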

\begin{proof}
Recall that $\{B^k\}_{k\geq 1}$ are independent $d$-dimensional Brownian motions and $g^{B^k}_{t,x}$ is defined in \eqref{gtx}. By the Feynman-Kac formula \eqref{fk}, we can write the moment formula for the solution as follows
\begin{align}\label{moment}
\E [u(t,x)^n]=\E^B\Big[\prod_{k=1}^nu_0(B_t^{k,x})\exp\Big(\frac{1}{2}\sum_{1\leq i\neq j\leq n}\lgl g_{t,x}^{B^i},g_{t,x}^{B^j}\rgl_{\kH}\Big)\Big].
\end{align}
Combining \eqref{moment} and Theorem 3.1 in \cite{ptrf-18-chen-hu-kalbasi-nualart}, we can deduce that
\begin{align*}
\E [u(t,x)^n]\leq \E^B\Big[\prod_{k=1}^nu_0(B_t^{k,x})\exp\Big(\frac{1}{2}\sum_{1\leq i, j\leq n}\lgl g_{t,x}^{B^i},g_{t,x}^{B^j}\rgl_{\kH}\Big)\Big]\leq C_x \exp\Big(Cn^{\frac{2-\alpha}{1-\alpha}}t^{\frac{2H+\alpha}{1-\alpha}}\Big).
\end{align*}
The proof of  this theorem is completed.
\end{proof}
\begin{remark}
An alternative proof of Theorem \ref{up} can be established by the chaos expansion of the solution to the SPDE \eqref{spde} and the hypercontractivity property of fixed Wiener chaos (c.f. Hu et al. \cite{abel-18-hu-huang-le-nualart-tindel}).
\end{remark}

\section{Lower bound for the moments}\label{s.4}
In this section, we prove the following theorem, which provides a lower bound for the moments of the solution to the SPDE \eqref{spde}.
\begin{theorem}\label{lp}
Suppose that $u_0$ is bounded, $\inf_{x\in\R^d}u_0>0$, and $Q$ satisfies Hypotheses \ref{hyp1} and \ref{hyp2} with $\alpha=\beta$. Let $u$ be the solution to equation \eqref{spde}. Then there exists a positive integer $N$ depending on $d,H$ and $\alpha$, such that for all $n\geq N$, $t\geq 1$ and $x\in\R^d$, the following inequality holds,
\begin{align}\label{lb}
\E \big[u(t,x)^n\big]\geq C_x\exp \Big(C n^{\frac{2-\alpha}{1-\alpha}}t^{\frac{2H+\alpha}{1-\alpha}}\Big),
\end{align}
where $C>0$ depends on $d,H,\alpha, \|u_0\|_{\infty}, \inf_{x\in\R^d}u_0$ and $C_x>0$ depends on  $d,H,\alpha, \|u_0\|_{\infty}$, $\inf_{x\in\R^d}u_0$ and $x$.
\end{theorem}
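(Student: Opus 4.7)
The plan is to combine the Feynman--Kac representation for the moments with the Brownian bridge conditioning sketched in the introduction. Applying \eqref{fk} to each factor in $u(t,x)^n$ with mutually independent copies $B^1,\dots,B^n$ of $B$ (all independent of $W$) and integrating out the Gaussian $W$ yields
\begin{equation*}
\E\big[u(t,x)^n\big]=\E^B\!\left[\prod_{k=1}^n u_0(B^{k,x}_t)\,\exp\!\bigg(\sum_{1\le i<j\le n}\lgl g^{B^i}_{t,x},g^{B^j}_{t,x}\rgl_{\kH}\bigg)\right].
\end{equation*}
Compared with the Stratonovich moment formula from \cite{ptrf-18-chen-hu-kalbasi-nualart}, the diagonal contribution $\tfrac12\sum_k\|g^{B^k}_{t,x}\|_\kH^2$ is missing. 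This is precisely the ``additional negative term'' alluded to in the introduction: one cannot simply restrict the Stratonovich lower bound to an event, but must instead exhibit a good event on which the off-diagonal sum \emph{alone} is already of the correct order.

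Choose $M=M(n,t)$ (to be determined later), assume $M\gg\sqrt{t}$, and set $R=[M,2M]^d$. Condition each $B^k$ on $B^k_{t/2}=r_k\in R$, so that on $[0,t/2]$ the path $B^k$ is a Brownian bridge from $0$ to $r_k$, and on $[t/2,t]$ it equals $r_k$ plus an independent Brownian motion. Let $A_k$ be the event
\begin{equation*}
\sup_{s\in[0,t]}\big|B^k_s-m_k(s)\big|\le L\sqrt{t}\qquad\text{and}\qquad [B^k]_\gamma\le K t^{1/2-\gamma},
\end{equation*}
where $m_k$ is the deterministic piecewise-linear path through $(0,0),(t/2,r_k),(t,r_k)$, $\gamma\in(\tfrac12-H,\tfrac12)$ is a fixed H\"older exponent, and $L,K$ are large constants depending on $d,H,\alpha$. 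Brownian scaling and standard bridge estimates give $\dP(A_k\mid B^k_{t/2}=r_k)\ge c_0>0$ uniformly in $r_k\in R$. Provided $M\ge 3L\sqrt{t}$, on $A_k$ every coordinate of $B^k_s$ exceeds $M/3$ for all $s\in[t/4,t]$.

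Next, one estimates the cross terms on the event $A=\bigcap_k A_k$. Using the representation of $\lgl\cdot,\cdot\rgl_\kH$ available for $H<1/2$ (see Section~2 of \cite{ptrf-18-chen-hu-kalbasi-nualart}), the cross term $\lgl g^{B^i}_{t,x},g^{B^j}_{t,x}\rgl_\kH$ can be written as a weighted double integral over $[0,t]^2$ involving $Q(B^i_{t-r},B^j_{t-s})$; applying Hypothesis~\ref{hyp2} coordinate-wise on the subregion where both paths exceed $M/3$ yields
\begin{equation*}
\lgl g^{B^i}_{t,x},g^{B^j}_{t,x}\rgl_\kH\ge c_1\,t^{2H}M^{2\alpha}\quad\text{on }A_i\cap A_j.
\end{equation*}
Combining this with $u_0\ge\inf u_0>0$ and the Gaussian lower bound $\prod_k p_{t/2}(r_k-x)\gtrsim t^{-nd/2}\exp(-CnM^2/t)$, then integrating $r_k$ over $R$ (which contributes a factor $M^{nd}$), leads to
\begin{equation*}
\E[u(t,x)^n]\ge (c_0\inf u_0)^n (M^2/t)^{nd/2}\exp\!\left(c_2 n^2 t^{2H}M^{2\alpha}-CnM^2/t\right).
\end{equation*}
Optimizing the exponent in $M$ gives $M\sim(nt^{2H+1})^{1/(2-2\alpha)}$, at which both terms in the exponential attain the order $n^{(2-\alpha)/(1-\alpha)}t^{(2H+\alpha)/(1-\alpha)}$ and the positive term beats the negative by a factor $1/\alpha-1>0$; the polynomial and $C^n$ prefactors are absorbed for $n\ge N$ sufficiently large because the $n$-exponent $(2-\alpha)/(1-\alpha)\ge 2$ is strictly greater than $1$.

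The main obstacle is the quantitative lower bound on $\lgl g^{B^i}_{t,x},g^{B^j}_{t,x}\rgl_\kH$ for $H<1/2$. The time-fBm inner product in this regime is not given by a positive kernel, so one cannot simply plug a pointwise bound from Hypothesis~\ref{hyp2} into a positive integral; instead one must work with a concrete representation (via the operator $K_H^*$, or equivalently via fractional derivatives on $[0,t]$), split the resulting expression into a ``main'' part carrying the $Q(B^i,B^j)$ factor pointwise and ``remainder'' double-difference terms bounded via Hypothesis~\ref{hyp1}, and show that on $A_i\cap A_j$ the remainder terms are controlled by the H\"older seminorm bound $[B^k]_\gamma\le Kt^{1/2-\gamma}$. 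The condition $\alpha>1-2H$ from Hypothesis~\ref{hyp1} enters exactly here, as it is what makes the relevant $|s-r|^{2H-2}$-weighted integrals of $|B^i_s-B^i_r|^{2\alpha}\lesssim |s-r|^{2\alpha\gamma}$ converge. Once these analytic estimates are in hand, the pinning, probabilistic and optimization steps proceed as sketched.
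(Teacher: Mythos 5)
Your overall architecture --- the moment formula with independent copies $B^1,\dots,B^n$, pinning each path at time $t/2$, controlling the infimum, supremum and H\"older seminorm of the resulting bridge and independent Brownian motion, and then optimizing $M\sim(nt^{2H+1})^{1/(2-2\alpha)}$ --- is exactly the paper's, and those parts are sound. The genuine gap is the step you yourself flag as ``the main obstacle'': the claimed per-pair bound $\lgl g^{B^i}_{t,x},g^{B^j}_{t,x}\rgl_{\kH}\ge c_1 t^{2H}M^{2\alpha}$ with $c_1>0$. Under the stated hypotheses this is not attainable by the route you sketch. In the representation of the inner product for $H<1/2$ (formula (2.12) of \cite{ptrf-18-chen-hu-kalbasi-nualart}), the ``remainder'' double-difference term is controlled by Hypothesis \ref{hyp1} and on your good event contributes up to a constant proportional to $C_1$ times $M^{2\alpha}t^{2H}$ --- the \emph{same order} as the ``main'' term, whose lower bound via Hypothesis \ref{hyp2} carries the unrelated constant $C_2$; moreover the main term itself involves $Q$ evaluated along the paths at times where they are not yet far from the origin and where $Q$ has no sign. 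Since nothing relates $C_1$ and $C_2$, an individual cross term can perfectly well fail to be positive on your event, and summing over pairs does not help because both the favorable and unfavorable contributions are of order $n^2$.

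The paper resolves this differently: it completes the square, writing $\sum_{i\neq j}\lgl g^{B^i}_{t,x},g^{B^j}_{t,x}\rgl_{\kH}=\big\|\sum_i g^{B^i}_{t,x}\big\|_{\kH}^2-\sum_i\|g^{B^i}_{t,x}\|_{\kH}^2$. The first term is the variance of $\int_0^t\sum_iX(B^{i,x}_{t-s})d\hB_s$ and is bounded below, by Lemma 4.3 of \cite{ptrf-18-chen-hu-kalbasi-nualart}, by $C_H\big[\int_0^t\big(\sum_{i,j}Q(B^{i,x}_s,B^{j,x}_s)\big)^{1/2H}ds\big]^{2H}$; here the full double sum (diagonal included) is itself a variance, hence nonnegative, so one may discard $[0,t/2]$ and apply Hypothesis \ref{hyp2} on $[t/2,t]$ to obtain a lower bound of order $C_2n^2M^{2\alpha}t^{2H}$. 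The subtracted diagonal is only of order $nM^{2\alpha}t^{2H}$ by Hypothesis \ref{hyp1}, which is exactly why the theorem holds only for $n\ge N$ with $c_1N-c_2>0$ --- a threshold your argument does not produce. To repair your proof you would need either this decomposition or some substitute positivity input; as written, the central estimate is asserted but not established.
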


\begin{proof}
We follow the ideas of Chen et al. \cite{ptrf-18-chen-hu-kalbasi-nualart} and Hu et al. \cite{abel-18-hu-huang-le-nualart-tindel} to prove this theorem. Without loss of generality, we assume that $u_0\equiv 1$. Recall that $\{B^k\}_{k\geq 1}$ are independent $d$-dimensional Brownian motions and $g^{B^k}_{t,x}$ is defined in \eqref{gtx}. By the moment formula \eqref{moment} and Lemma 4.2 of \cite{ptrf-18-chen-hu-kalbasi-nualart}, there exist a Gaussian process $X=\{X(x),x\in\R^d\}$ with correlation $\E [X(x)X(y)]=Q(x,y)$ and an independent fractional Brownian motion $\hB=\{\hB_t,t\in \R\}$ with Hurst parameter $H$, such that
\begin{align}\label{moment1}
\E [u(t,x)^n]=\E^B\exp\bigg\{\E^{X,\hB}\Big[\frac{1}{2}\Big(\int_0^t\sum_{i=1}^nX(B_{t-s}^{i,x})d\hB_s\Big)^2\Big]-\frac{1}{2}\sum_{i=1}^n\|g_{t,x}^{B^i}\|_{\kH}^2\bigg\}.
\end{align}
Due to Lemma 4.3 of \cite{ptrf-18-chen-hu-kalbasi-nualart}, we know that there exists a constant $C_H>0$ depending on $H$ such that
\begin{align}
\E^{X,\hB}\Big[\frac{1}{2}\Big(\int_0^t\sum_{i=1}^nX(B_{t-s}^{i,x})d\hB_s\Big)^2\Big]\geq C_H\Big[\int_0^t\Big(\sum_{i,j=1}^nQ(B_s^{i,x},B_s^{j,x})\Big)^{\frac{1}{2H}}ds\Big]^{2H}.
\end{align}
On the other hand, by (2.2) and (2.12) of \cite{ptrf-18-chen-hu-kalbasi-nualart}, we have
\begin{align}\label{ipch}
\|g_{t,x}^{B^{i,x}}\|_{\kH}^2=\E \big[I_1(g_{t,x}^{B^i,x})^2\big]=&H\int_0^t\theta^{2H-1}[Q(B^{i,x}_{\theta},B^{i,x}_{\theta})+Q(B^{i,x}_{t-\theta},B^{i,x}_{t-\theta})]d\theta\nonumber\\
&+|\alpha_H|\int_0^t\int_0^{\theta}r^{2H-2}\hQ(\theta,\theta-r,B^{i,x},B^{i,x})drd\theta,
\end{align}
where $\alpha_H=2H(2H-1)$ and 
\begin{equation*}
\hQ(u,v,\phi,\psi)=\frac{1}{2}[Q(\phi_u,\psi_u)+Q(\phi_v,\psi_v)-Q(\phi_u,\psi_v)-Q(\phi_v,\psi_u)].
\end{equation*}
Recall that $Q$ satisfies Hypothesis \ref{hyp1}. Thus it is easy to deduce that
\begin{align}\label{hq}
\hQ(\theta, \theta-r, B^{i,x}, B^{i,x})\leq  \frac{C_1}{2}|B^i_{\theta}-B^i_{\theta -r}|^{2\alpha}
\end{align}
and
\begin{align}\label{qxy0}
|Q(x,y)|\leq (C_1^{1/2}|x|^{\alpha}+Q(0,0)^{1/2})(C_1^{1/2}|y|^{\alpha}+Q(0,0)^{1/2}).
\end{align}
To simplify the computations, we assume that $Q(0,0)=0$. In the general case, the proof can be done in a similar way without significant differences. Let $M>0$ and let $\epsilon\in(0,\frac{1}{2})$. Consider the following events
\begin{align*}
G_0^1(M)=\bigg\{\inf_{\substack{1\leq i\leq n, 1\leq j\leq d\\  s\in [t/2,t]}}|B_s^{i,x,j}|\geq M\bigg\},\quad G_0^2(M)=\bigg\{ \sup_{\substack{1\leq i\leq n,1\leq j\leq d\\s\in [0,t]}}|B_s^{i,x,j}|\leq 4M\bigg\},\\
\end{align*}
and
\begin{align*}
 G_0^3(M)=\bigg\{\sup_{\substack{1\leq i\leq n,1\leq j\leq d\\ 0\leq v<u\leq t}}\frac{|B^{i,x,j}_u-B^{i,x,j}_v|}{|u-v|^{\frac{1}{2}-\epsilon}}\leq\frac{16M}{t^{\frac{1}{2}-\epsilon}} \bigg\},
\end{align*}
where $B^{i,x,j}$ denotes the $j$-th component of $B^{i,x}$ for $j=1,\dots, d$.

 On $G_0^1(M)$, by Hypothesis \ref{hyp2} and using the assumption that $\alpha=\beta$, we have the inequality
\begin{align}\label{ex1}
\Big[\int_0^t\Big(\sum_{i,j=1}^nQ(B_s^{i,x},B_s^{j,x})\Big)^{\frac{1}{2H}}ds\Big]^{2H}\geq& \Big[\int_{\frac{t}{2}}^t\big(C_2n^2|M|^{2\alpha }\big)^{\frac{1}{2H}}ds\Big]^{2H}\nonumber\\
=&2^{-2H}C_2n^2M^{2\alpha}t^{2H}.
\end{align}
On $G_0^2(M)$, using \eqref{qxy0}, we get that
\begin{align}
\int_0^t\theta^{2H-1}[Q(B^{i,x}_{\theta},B^{i,x}_{\theta})+Q(B^{i,x}_{t-\theta},B^{i,x}_{t-\theta})]d\theta\leq &\int_0^t\theta^{2H-1}2C_1|4\sqrt{d}M|^{2\alpha}d\theta\nonumber\\
=&2^{4\alpha}d^{\alpha}H^{-1}C_1M^{2\alpha}t^{2H}.
\end{align}
Finally, on $G_0^3(M)$, using \eqref{hq}, we get
\begin{align}\label{ex3}
\int_0^t\int_0^{\theta}r^{2H-2}\hQ(\theta,\theta-r,B^{i,x},B^{i,x})drd\theta\leq &\int_0^t\int_0^{\theta}r^{2H-2}\frac{C_1}{2}\Big(\frac{16\sqrt{d}M}{t^{\frac{1}{2}-\epsilon}}r^{\frac{1}{2}-\epsilon}\Big)^{2\alpha}drd\theta\nonumber\\
=&\frac{2^{8\alpha-1}d^{\alpha}C_1M^{2\alpha}t^{2H}}{(2H+\alpha-2\alpha\epsilon-1)(2H+\alpha-2\alpha\epsilon)}.
\end{align}
Set $G_0(M)=\bigcap_{k=1}^3G_0^k(M)$. Due to inequalities \eqref{moment1} - \eqref{ipch} and \eqref{ex1} - \eqref{ex3}, we obtain
\begin{align}\label{unl0}
\E [u(t,x)^n]\geq&\exp\big[(c_1n^2-c_2n)M^{2\alpha}t^{2H}\big]  \dP[G_0(M)],
\end{align}
where 
\[
c_1=2^{-2H}C_2C_H\quad \mathrm{and}\quad c_2=2^{4\alpha-1}d^{\alpha}C_1+\frac{2^{8\alpha-2}d^{\alpha}C_1|\alpha_H|}{(2H+\alpha-2\alpha\epsilon-1)(2H+\alpha-2\alpha\epsilon)}.
\]

For any $x=(x_1,\dots, x_d)\in \R^d$, let $\{\tB^{j,x_j}\}_{1\leq j\leq d}$ be independent one-dimensional Brownian motions such that $\tB^{j,x_j}$ starts from $x_j$ for all $j=1,\dots, d$. For any $j$, let $G^j(M)$ be the event given by
\begin{align}\label{gm}
G^{j}(M):=\Big\{\inf_{s\in[t/2,t]}|\tB^{j,x_j}_s|\geq M, \sup_{s\in [0,t]}|\tB^{j,x_j}_s|\leq 4M, \sup_{0\leq v<u\leq t}\frac{|\tB^{j,x_j}_u-\tB^{j,x_j}_v|}{|u-v|^{\frac{1}{2}-\epsilon}}\leq \frac{16M}{t^{\frac{1}{2}-\epsilon}}\Big\},
\end{align}
and denote $G(M)=\bigcap_{j=1}^d G^j(M)$. Since $\{B^{i,x}\}_{1\leq i\leq n}$ are independent $d$-dimensional Brownian motions starting at $x=(x_1,\dots,x_d)$, the following equality holds
\begin{align*}
\dP[G_0(M)]=\dP [G(M)]^n=\prod_{j=1}^d\dP[G^j(M)]^n.
\end{align*}
This allows us to rewrite \eqref{unl0} in the following way,
\begin{align}\label{unl1}
\E [u(t,x)^n]\geq&\exp\big[(c_1n^2-c_2n)M^{2\alpha}t^{2H}\big] \prod_{j=1}^d \dP[G^j(M)]^n.
\end{align}

In order to estimate $\dP[G^j(M)]$, we pin the Brownian motion $\tB^{j,x_j}$ at $t/2$, and obtain that
\begin{align}\label{gjm1}
\dP[G^j(M)]=&\int_M^{4M}\dP\big[G^j(M)\big|\tB^{j,x_j}_{t/2}=r\big]q_{t/2}(r-x_j)dr\nonumber\\
\geq &\int_{2M}^{3M}\dP\big[G^j(M)\big|\tB^{j,x_j}_{t/2}=r\big]q_{t/2}(r-x_j)dr,
\end{align}
where $q_t(x)=(2\pi t)^{-\frac{1}{2}}\exp[-x^2/(2t)]$ is the one-dimensional heat kernel. Notice that conditioned on $\tB^{j,x_j}_{t/2}=r$, the process $\{\tB^{j,x_j}_s,s\in[0,t/2]\big\}$ is a Brownian bridge, denoted by $Y=\{Y_s, s\in [0,t/2]\}$, such that $Y_0=x_j$ and $Y_{t/2}=r$. In addition,  the process $\{\tB^{j,x_j}_{t/2+s}-r,s\in[0,t/2]\}$, denoted by $Z=\{Z_s, s\in[0,t/2]\}$, is a standard Brownian motion independent of $Y$. Let $A_1,\dots,A_4$ be the events given by
\begin{align*}
A_1=\Big\{\sup_{s\in[0,t/2]}|Z_s|\leq M\Big\},\quad A_2=\Big\{\sup_{s\in[0,t/2]}|Y_s|\leq 4M\Big\},
\end{align*}
\begin{align*}
A_3=\bigg\{ \sup_{0\leq u< v\leq t/2}\frac{|Z_u-Z_v|}{|u-v|^{\frac{1}{2}-\epsilon}}\leq \frac{8M}{t^{\frac{1}{2}-\epsilon}}\bigg\},\quad A_4=\bigg\{ \sup_{0\leq u< v\leq t/2}\frac{|Y_u-Y_v|}{|u-v|^{\frac{1}{2}-\epsilon}}\leq \frac{8M}{t^{\frac{1}{2}-\epsilon}}\bigg\}.
\end{align*}
Observe that for any $0\leq v<t/2<u\leq t$, it is easy to see that
\begin{align*}
\frac{|\tB^{j,x_j}_u-\tB^{j,x_j}_v|}{|u-v|^{\frac{1}{2}-\epsilon}}\leq 2\max \bigg\{\frac{|\tB^{j,x_j}_{t/2}-\tB^{j,x_j}_v|}{|t/2-v|^{\frac{1}{2}-\epsilon}},\frac{|\tB^{j,x_j}_u-\tB^{j,x_j}_{t/2}|}{|u-t/2|^{\frac{1}{2}-\epsilon}}\bigg\}.
\end{align*}
It follows that conditional on $\tB^{j,x_j}_{t/2}=r$,
\begin{align*}
\sup_{0\leq v<u\leq t}\frac{|\tB^{j,x_j}_u-\tB^{j,x_j}_v|}{|u-v|^{\frac{1}{2}-\epsilon}}\leq 2\max \bigg\{\sup_{0\leq v<u\leq t/2}\frac{|Y_u-Y_v|}{|u-v|^{\frac{1}{2}-\epsilon}},\sup_{0\leq v<u\leq t/2}\frac{|Z_u-Z_v|}{|u-v|^{\frac{1}{2}-\epsilon}}\bigg\},
\end{align*}
and thus
\begin{align}\label{sphd}
&\bigg\{\sup_{0\leq v<u\leq t}\frac{|\tB^{j,x_j}_u-\tB^{j,x_j}_v|}{|u-v|^{\frac{1}{2}-\epsilon}}\leq \frac{16M}{t^{\frac{1}{2}-\epsilon}}\bigg\}\supset A_3\cap A_4.
\end{align}
Moreover, if we restrict $r\in [2M, 3M]$ as in \eqref{gjm1}, the following inclusion is true,
\begin{align}\label{b2}
\Big\{\inf_{s\in [t/2,t]} |\tB^{j,x_j}_s|\geq M,\sup_{s\in [0,t]} |\tB^{j,x_j}_s|\leq 4M\Big\}\supset A_1\cap A_2.
\end{align}
 Therefore, by \eqref{gm}, \eqref{sphd} and \eqref{b2}, we have for $r\in [2M,3M]$,
\begin{align*}
\dP\big[G^j(M)\big|B^{j,x_j}_{t/2}=r\big]\geq \dP\Big(\bigcap_{k=1}^4A_k\Big).
\end{align*}
Because $Y$ and $Z$ are independent, we can write
\begin{align}\label{a0}
\dP\Big(\bigcap_{k=1}^4A_k\Big)=&1-\dP\Big(\bigcup_{k=1}^4A_k^c\Big)\geq 1-\dP\big(A_1^c\bigcup A_2^c)-\dP\big(A_3^c\bigcup A_4^c)\nonumber\\
=&\dP(A_1)\dP(A_2)+\dP(A_3)\dP(A_4)-1.
\end{align}
{\bf Estimation of $\dP(A_1)$:} It follows from Doob's martingale inequality that
\begin{align}\label{a1}
\dP(A_1)=1-\dP(A_1^c)\geq 1-M^{-2}\E (|Z_{t/2}|^2)= 1-\frac{t}{2M^2}.
\end{align}
{\bf Estimation of $\dP(A_3)$:} Recall that $\epsilon\in (0,1/2)$. By Kolmogorov's continuity criterion (c.f. Theorem 3.1 of Friz and Hairer \cite{springer-14-friz-hairer}), there exists a modification of $Z$, denoted by $\tZ$, and a random variable $K_{\epsilon}$, such that 
\[
 \sup_{0\leq u< v\leq t/2}\frac{|\tZ_u-\tZ_v|}{|u-v|^{\frac{1}{2}-\epsilon}}\leq K_{\epsilon}\quad \mathrm{and}\quad \E(|K_{\epsilon}|^{\frac{2}{\epsilon}})\leq C_{\epsilon}t^2,
\]
where $C_{\epsilon}>0$ is a constant depending only on $\epsilon$. Combining this fact with Chebyshev's inequality, we have
\begin{align}\label{a3}
\dP(A_3)=1-\dP(A_3^c)\geq 1- \Big(\frac{8M}{t^{\frac{1}{2}-\epsilon}}\Big)^{-\frac{2}{\epsilon}}\E(|K_{\epsilon}|^{\frac{2}{\epsilon}})\geq  1-2^{-\frac{6}{\epsilon}}C_{\epsilon}\frac{ t^{\frac{1}{\epsilon}}}{M^{\frac{2}{\epsilon}}}.
\end{align}
{\bf Estimation of $\dP(A_2)$:} Let $\tB$ be a one-dimensional standard Brownian motion. Then the Brownian bridge $Y$ has the same distribution as the process $\tY=\{\tY_s,0\leq s\leq t/2\}$ where
\begin{align}\label{tys}
\tY_s=x_j+\tB_s-\frac{2s}{t}(\tB_{t/2}-r+x_j).
\end{align}
Thus, we can deduce that
\begin{align*}
\dP(A_2^c)=&\dP\Big[\sup_{0\leq s\leq t/2}\Big|\Big(1-\frac{2s}{t}\Big)x_j+\frac{2sr}{t}+\tB_s-\frac{2s}{t}\tB_{t/2}\Big|>4M\Big]\\
\leq &\dP\Big(\sup_{0\leq s\leq t/2}|\tB_s|+|\tB_{t/2}|>4M-r-|x_j|\Big)\leq \dP\Big(\sup_{0\leq s\leq t/2}|\tB_s|>2M-\frac{r+|x_j|}{2}\Big).
\end{align*}
Assume that $\frac{M}{2}>\max\{|x_1|,\dots, |x_n|\}$ and recall that $r\in [2M, 3M]$. It follows that
\begin{align}\label{a2}
\dP(A_2)=1-\dP(A_2^c)\geq 1- \dP\Big(\sup_{0\leq s\leq t/2}|\tB_s|>\frac{M}{4}\Big)\geq 1-\frac{8t}{M^2}.
\end{align}
{\bf Estimation of $\dP(A_4)$:} Due to \eqref{tys} and the fact $r\in [2M, 3M]$, we have
\begin{align*}
\frac{|\tY_u-\tY_v|}{|u-v|^{\frac{1}{2}-\epsilon}}\leq &\frac{|\tB_u-\tB_v|}{|u-v|^{\frac{1}{2}-\epsilon}}+\frac{2|u-v|^{\frac{1}{2}+\epsilon}}{t}(|\tB_{t/2}|+r+|x_j|)\\
\leq &K^{\tB}_{\epsilon}+\frac{2^{\frac{1}{2}-\epsilon}(|\tB_{t/2}|+\frac{7}{2}M)}{t^{\frac{1}{2}-\epsilon}},
\end{align*}
for all $0\leq v<u\leq t/2$, where $K^{\tB}_{\epsilon}$ is the almost surely upper bound of the $(\frac{1}{2}-\epsilon)$-H\"{o}lder norm of $\tB$ on $[0,t/2]$ and $\E [|K^{\tB}_{\epsilon}|^{\frac{2}{\epsilon}}]\leq C_{\epsilon}t^2$. Therefore, 
\begin{align}\label{a4}
\dP(A_4)=&1-\dP(A_4^c)= 1-\dP\Big(\sup_{0\leq u< v\leq t/2}\frac{|\tY_u-\tY_v|}{|u-v|^{\frac{1}{2}-\epsilon}}\geq \frac{8M}{t^{\frac{1}{2}-\epsilon}}\Big)\nonumber\\
\geq& 1- \dP \Big(K_{\epsilon}^{\tB}+\frac{2^{\frac{1}{2}-\epsilon}|\tB_{t/2}|}{t^{\frac{1}{2}-\epsilon}}\geq \frac{M}{t^{\frac{1}{2}-\epsilon}}\Big)\geq1- \big[2^{\frac{2}{\epsilon}-1}C_{\epsilon}+2^{\frac{2}{\epsilon}-3}\E|\tB_1|^{\frac{2}{\epsilon}}\big]\frac{t^{\frac{1}{\epsilon}}}{M^{\frac{2}{\epsilon}}}.
\end{align}
According to inequalities \eqref{a1}, \eqref{a3}, \eqref{a2} and \eqref{a4}, and choosing 
\begin{align}\label{m1}
M\geq C_{1,\epsilon}t^{\frac{1}{2}}:=\max\Big\{\Big(\frac{8}{1-\sqrt{3}/2}\Big)^{1/2},\Big(\frac{2^{\frac{2}{\epsilon}-1}C_{\epsilon}+2^{\frac{2}{\epsilon}-3}\E|\tB_1|^{\frac{2}{\epsilon}}}{1-\sqrt{3}/2}\Big)^{\epsilon/2}\Big\}t^{\frac{1}{2}},
\end{align}
we can make $\dP(A_k)\geq \sqrt{3}/2$ for all $k=1,\dots,4$. Thus by \eqref{a0}, we have 
\begin{align}\label{pgm1}
\dP\big[G^j(M)|\tB^{j,x_j}_{t/2}=r\big]=\dP(\bigcap_{k=1}^4A_k)\geq \frac{1}{2}.
\end{align}
Plugging \eqref{pgm1} into inequality \eqref{gjm1} and recalling that $M$ satisfies \eqref{m1} and $\frac{M}{2}\geq |x_j|$, we can write
\begin{align}\label{gim2}
\dP(G^j(M))\geq &\frac{1}{2}\int_{2M}^{3M}dr q_{t/2}(x^j-r)\geq \frac{M}{\sqrt{4\pi t}}e^{-\frac{16M^2}{t}}\geq \frac{C_{1,\epsilon}}{\sqrt{4\pi}}e^{-\frac{16M^2}{t}}\geq e^{-\frac{16M^2}{t}}.
\end{align}
Combining \eqref{unl1} and \eqref{gim2}, we have
\begin{align}\label{unl2}
\E [u(t,x)^n]\geq&\exp\big[(c_1n^2-c_2n)M^{2\alpha}t^{2H}-c_3nM^2t^{-1}\big],
\end{align}
where $c_3=16d$. Let $N$ be the smallest integer such that $c_1n-c_2>0$. Then, for any $n\geq N$, by maximizing the function
\[
f(M)=(c_1n^2-c_2n)M^{2\alpha}t^{2H}-c_3nM^2t^{-1},
\]
we find 
\begin{align}\label{m0}
M_0=\big(\alpha(c_1n-c_2)c_3^{-1}t^{2H+1}\big)^{\frac{1}{2-2\alpha}},
\end{align}
such that
\begin{align}\label{fm0}
\sup_{M\geq 0}f(M)=f(M_0)=&(1-\alpha)\alpha^{\frac{\alpha}{1-\alpha}}c_3^{-\frac{\alpha}{1-\alpha}}n(c_1n-c_2)^{\frac{1}{1-\alpha}}t^{\frac{2H+\alpha}{1-\alpha}}\nonumber\\
\geq &(1-\alpha)\alpha^{\frac{\alpha}{1-\alpha}}c_3^{-\frac{\alpha}{1-\alpha}}(c_1-c_2/N)n^{\frac{2-\alpha}{1-\alpha}}t^{\frac{2H+\alpha}{1-\alpha}}.
\end{align}
Notice that for any $t\geq 1$ and $n\geq N$, the number $M_0$ given by \eqref{m0} satisfies the following inequality
\begin{align}\label{m01}
M_0\geq \max\{\big(\alpha(c_1N-c_2)c_3^{-1}t^{2H+\alpha}\big)^{\frac{1}{2-2\alpha}}t^{\frac{1}{2}},\big(\alpha(c_1n-c_2)c_3^{-1}\big)^{\frac{1}{2-2\alpha}}t^{\frac{1}{2}}\}.
\end{align}
Let 
\[
n_0(x):=\max\Big\{N, \frac{c_2\alpha+c_3[2\max\{|x_1|,\dots, |x_d|\}]^{2-\alpha}}{c_1 \alpha},\frac{c_2\alpha+c_3C_{1,\epsilon}^{2-2\alpha}}{c_1\alpha}\Big\}
\]
and let 
\[
t_0(x):=\max\Big\{1,\Big(\frac{c_3[2\max\{|x_1|,\dots, |x_d|\}]^{2-2\alpha}}{\alpha(c_1N-c_2)}\Big)^{\frac{1}{2H+\alpha}}, \Big(\frac{c_3C_{1,\epsilon}^{2-2\alpha}}{\alpha(c_1N-c_2)}\Big)^{\frac{1}{2H+\alpha}}\Big\}.
\]
Then, for any 
\begin{align}\label{l1}
(t,n)\in L_1:= \{(s,m)\in \R_+\times \N, s\geq 1, m\geq n_0(x)\},
\end{align}
or
\begin{align}\label{l2}
(t,n)\in L_2:= \{(s,m)\in \R_+\times \N, s\geq t_0(x), m\geq N\},
\end{align}
by using \eqref{m01}, we have $\frac{M_0}{2}\geq \max\{|x_1|,\dots, |x_d|\}$ and $M_0\geq C_{1,\epsilon} t^{\frac{1}{2}}$. This implies that if $(t,n)\in L_1\cup L_2$, inequality \eqref{unl2} is true when $M$ is replaced by $M_0$. In this case, it follows from \eqref{fm0} that
\begin{align}\label{un1}
\E [u(t,x)^n]\geq e^{f(M_0)} \geq \exp\big[(1-\alpha)\alpha^{\frac{\alpha}{1-\alpha}}c_3^{-\frac{\alpha}{1-\alpha}}(c_1-c_2/N)n^{\frac{2-\alpha}{1-\alpha}}t^{\frac{2H+\alpha}{1-\alpha}}\big].
\end{align}
On the other hand, let $M_1=\max\{2|x_1|,\dots, 2|x_d|, C_{1,\epsilon}t_0(x)^{\frac{1}{2}}\}$. Then for any 
\begin{align}\label{l3}
(t,n)\in L_3:=\{(s,m)\in \R_+\times \N, 1\leq s\leq t_0(x),N\leq m\leq n_0(x)\},
\end{align}
inequality \eqref{unl2} is true when $M$ is replaced by $M_1$. In this case, we can deduce that
\begin{align}\label{un2}
\E [u(t,x)^n]\geq&\exp\big[(c_1n^2-c_2n)M_1^{2\alpha}t^{2H}-c_3nM_1^2t^{-1}]\nonumber\\
\geq &\inf_{\substack{1\leq t\leq t_0(x)\\N\leq n\leq n_0(x)}}\bigg\{\exp\big[(c_1n^2-c_2n)M_1^{2\alpha}t^{2H}-c_3nM_1^2t^{-1}-C_0n^{\frac{2-\alpha}{1-\alpha}}t^{\frac{2H+\alpha}{1-\alpha}}]\bigg\}\nonumber\\
&\times \exp\Big(C_0n^{\frac{2-\alpha}{1-\alpha}}t^{\frac{2H+\alpha}{1-\alpha}}\Big)\nonumber\\
:=&C_x \exp\Big(C_0n^{\frac{2-\alpha}{1-\alpha}}t^{\frac{2H+\alpha}{1-\alpha}}\Big).
\end{align}
Notice that $\{t\geq 1, n\geq N\}=L_1\cup L_2\cup L_3$ where $L_1$, $L_2$ and $L_3$ are defined in \eqref{l1}, \eqref{l2} and \eqref{l3} respectively. Therefore, by \eqref{un1} and \eqref{un2}, we have inequality \eqref{lb}. This completes the proof of this theorem.
\end{proof}

\end{document}